\newtheorem{proposition}{Proposition}
\newtheorem{theorem}[proposition]{Theorem}
\newtheorem{corollary}[proposition]{Corollary}
\DeclareMathOperator{\proj}{\mathsf{proj}} %Finitely generated projectives
\DeclareMathOperator{\Hom}{\mathsf{Hom}} %Collection of morphisms
\DeclareMathOperator{\gldim}{\mathsf{gl.dim}} %Global dimension
\DeclareMathOperator{\Tor}{\mathsf{Tor}} %Tor functor
\DeclareMathOperator{\op}{\mathsf{op}} %Opposite
\let\mod\relax
\DeclareMathOperator{\mod}{\mathsf{mod}} %Category of finitely presented modules
\DeclareMathOperator{\Mod}{\mathsf{Mod}} %Category of modules
\newcommand{\C}{\mathcal{C}}
\begin{document}

\title{On the equivalence between the existence of $n$-kernels and $n$-cokernels}
\author{Vitor Gulisz}
\address{Mathematics Department, Northeastern University, Boston, MA 02115, USA}
\email{gulisz.v@northeastern.edu}
\author{Wolfgang Rump}
\address{Institute for Algebra and Number Theory, University of Stuttgart, Pfaffenwaldring 57, 70550 Stuttgart, Germany}
\email{rump@mathematik.uni-stuttgart.de}
\date{February 14, 2026}

\begin{abstract}
We give an elementary proof of the statement that if an idempotent complete preadditive category has weak kernels and weak cokernels, then it has $n$-kernels if and only if it has $n$-cokernels, where $n$ is a nonnegative integer. As a consequence, elementary proofs of two results concerning the equality between the global dimensions of certain right and left module categories are obtained.
\end{abstract}

\maketitle

\section*{Introduction}\label{section.1}

The notions of $n$-kernel and $n$-cokernel are key in higher homological algebra, in the context of $n$-abelian and $n$-exact categories, see \cite{MR3519980}. In particular, they are essential to define $n$-almost split sequences, which play a central role in higher \mbox{Auslander--Reiten} \mbox{theory,} see \cite{MR2298819}. Also, $n$-kernels and $n$-cokernels arise in connection with derived equivalences between endomorphism algebras via tilting modules, see \cite[Lemma 3.4]{MR2782196}. Moreover, they can be used to prove the equality between the global dimensions of certain right and left module categories, see Corollaries \ref{corollary.2}, \ref{corollary.3} and \ref{corollary.4}. In fact, these corollaries follow from Theorem \ref{theorem.1}, whose proof is the main subject of this paper. Let us give more details and explain the motivation for our proof.

Let $\C$ be an idempotent complete additive category, and let $n$ be a nonnegative integer. It was proved in \cite[Proposition 6]{MR4971590} that if $\C$ has weak kernels and weak cokernels, then $\C$ has $n$-kernels if and only if $\C$ has $n$-cokernels.\footnote{Actually, in \cite[Proposition 6]{MR4971590}, only the case $n \geqslant 1$ was considered. However, the same argument used in the reference proves the case $n = 0$, due to \cite[Proposition 2.1]{2509.24810}.} The proof presented in the reference is based on the fact that when $\C$ has weak kernels and weak cokernels, the global dimensions of the abelian categories $\mod \C$ and $\mod \C^{\op}$ coincide, where $\mod \C$ and $\mod \C^{\op}$ are the categories of finitely presented right $\C$-modules and finitely presented left $\C$-modules, respectively. This fact, in turn, is far from being trivial, and its standard proof relies on the tensor product $- \otimes -$ and $\Tor_{i}(-,-)$ functors on $\mod \C \times \mod \C^{\op}$, see, for example, \cite[Corollary 5.6]{MR2027559} or \cite[Theorem 67]{MR4971590}. The phenomenon of such an elementary result on the equivalence between the existence of $n$-kernels and $n$-cokernels having such a sophisticated proof intrigued the first author, who then asked the second author for an elementary proof. Within a few hours, the second author sent such a proof for the case $n = 1$ to the first author, who was very surprised and then extended it to the general case $n \geqslant 0$. The purpose of this paper is to present this proof, which holds even by downgrading the assumption that $\C$ is additive to the condition that $\C$ is preadditive, see Theorem \ref{theorem.1}. As a consequence, we obtain an elementary proof of the fact that when $\C$ has weak kernels and weak cokernels, the global dimensions of $\mod \C$ and $\mod \C^{\op}$ coincide, see Corollary \ref{corollary.2}. In particular, our proof shows that, for a coherent ring $\Lambda$, the global dimensions of the categories of finitely presented right $\Lambda$-modules $\mod \Lambda$ and of finitely presented left $\Lambda$-modules $\mod \Lambda^{\op}$ coincide, see Corollary \ref{corollary.3}, which then shows that if $\Lambda$ is noetherian, then the global dimensions of the categories of right $\Lambda$-modules $\Mod \Lambda$ and of left $\Lambda$-modules $\Mod \Lambda^{\op}$ coincide, see Corollary \ref{corollary.4}.

\section*{Preliminaries}\label{section.2}

For the convenience of the reader, before we state and prove the results mentioned above, we briefly recall the definitions used throughout this paper. The reader already familiar with the jargon used in the introduction may skip this section and consult it only when necessary.

A category $\C$ is \textit{preadditive} if the collection of morphisms from an object to another is always an abelian group and the composition of morphisms in $\C$ is bilinear. If $\C$ is preadditive, has a zero object and finite direct sums, then $\C$ is called \textit{additive}. We also say that $\C$ is \textit{idempotent complete} if for every idempotent morphism $e$ in $\C$ there are morphisms $f$ and $g$ in $\C$ such that $e = fg$ and $gf = 1$.

Let $\C$ be a preadditive category, and let $a_{0} : A_{0} \to A_{1}$ be a morphism in $\C$. Recall that a \textit{weak cokernel} of $a_{0}$ is a morphism $a_{1} : A_{1} \to A_{2}$ for which $a_{1}a_{0} = 0$ and such that for every morphism $v : A_{1} \to V$ satisfying $va_{0} = 0$ there is a morphism $w : A_{2} \to V$ such that $v = wa_{1}$. A \textit{cokernel} of $a_{0}$ is a weak cokernel of $a_{0}$ that is an epimorphism. Now, let $n$ be a nonnegative integer. Following \cite{MR3519980}, for $n \geqslant 1$, an \textit{$n$-cokernel} of $a_{0}$ is a sequence of morphisms \[ \begin{tikzcd}
A_{1} \arrow[r, "a_{1}"] & A_{2} \arrow[r, "a_{2}"] & \cdots \arrow[r, "a_{n-1}"] & A_{n} \arrow[r, "a_{n}"] & A_{n+1}
\end{tikzcd} \] such that $a_{i}$ is a weak cokernel of $a_{i-1}$ for each $1 \leqslant i \leqslant n - 1$ and $a_{n}$ is a cokernel of $a_{n-1}$. For $n = 0$, we follow \cite{2509.24810} and define a \textit{$0$-cokernel} of $a_{0}$ to be an epimorphism $g : A_{0} \to X$ for which there is a split monomorphism $f : X \to A_{1}$ such that $a_{0} = fg$. The notions of \textit{weak kernel} and \textit{$n$-kernel} of a morphism in $\C$ are defined dually, by considering the definitions of weak cokernel and $n$-cokernel in $\C^{\op}$, the opposite category of $\C$. Finally, we say that $\C$ has weak cokernels (respectively, weak kernels, $n$-cokernels, $n$-kernels) if every morphism in $\C$ has a weak cokernel (respectively, a weak kernel, an $n$-cokernel, an $n$-kernel).

The reader is referred to \cite{MR4971590} for the definitions of a \textit{finitely presented right $\C$-module} and \textit{finitely presented left $\C$-module}, for an additive category $\C$.

\section*{The results and their proofs}\label{section.3}

As we remarked in the introduction, the following result, Theorem \ref{theorem.1}, was proved in \cite[Proposition 6]{MR4971590} under the additional assumption that $\C$ is an additive category. While the proof presented in \cite[Proposition 6]{MR4971590} depends on the tensor product $- \otimes -$ and $\Tor_{i}(-,-)$ functors on $\mod \C \times \mod \C^{\op}$, the proof that we present below is completely elementary as it only depends on basic notions of categorical algebra.

\begin{theorem}\label{theorem.1}
Let $\C$ be an idempotent complete preadditive category that has weak kernels and weak cokernels, and let $n$ be a nonnegative integer. Then $\C$ has $n$-kernels if and only if $\C$ has $n$-cokernels.
\end{theorem}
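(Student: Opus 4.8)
The plan is to reduce the statement to a single implication and then to build the required complex explicitly by iterating weak kernels, isolating the one place where the cokernel hypothesis must enter.

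\emph{Reduction to one direction.} All the hypotheses on $\C$ are self-dual: idempotent completeness and preadditivity are preserved under passage to $\C^{\op}$, and weak kernels in $\C$ are weak cokernels in $\C^{\op}$. Moreover, the existence of $n$-kernels in $\C$ is equivalent to the existence of $n$-cokernels in $\C^{\op}$, since reversing all arrows turns the defining exactness of $0 \to \C(Y,X^{-n}) \to \cdots \to \C(Y,X^1)$ into that of an $n$-cokernel over $\C^{\op}$. Hence it suffices to prove that if $\C$ has $n$-cokernels, then $\C$ has $n$-kernels; applying this to $\C^{\op}$ yields the converse. I would dispatch the degenerate case $n=0$ first, using the characterization behind the footnote, since it anchors the Yoneda reformulation below.

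\emph{Reformulation via Yoneda and greedy construction.} Fix $f^0 \colon X^0 \to X^1$. A complex $X^{-n} \xrightarrow{d^{-n}} \cdots \xrightarrow{d^{-1}} X^0 \xrightarrow{f^0} X^1$ is an $n$-kernel of $f^0$ exactly when, for every $Y$, the sequence $0 \to \C(Y,X^{-n}) \to \cdots \to \C(Y,X^0) \to \C(Y,X^1)$ is exact. I would build the terms successively: take $d^{-1}$ a weak kernel of $f^0$, then $d^{-2}$ a weak kernel of $d^{-1}$, and so on down to $d^{-n}$. By the defining property of weak kernels, the resulting complex is automatically exact at each of $X^0, X^{-1}, \ldots, X^{-n+1}$ after applying $\C(Y,-)$; the only remaining condition is exactness at the left end, i.e.\ that $\C(Y,X^{-n}) \to \C(Y,X^{-n+1})$ be injective for all $Y$, equivalently that $d^{-n}$ be a monomorphism. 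Idempotent completeness is what lets me replace $X^{-n}$ by a direct summand when arranging this.

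\emph{The crux.} Everything thus reduces to one assertion: after $n$ steps of taking weak kernels, the last map $d^{-n}$ can be chosen monic, and this is precisely where $n$-cokernels are used. I would prove it by a transpose/dimension-shift argument that never leaves $\C$. The obstruction to $d^{-n}$ being monic is recorded by a weak kernel $d^{-n-1} \colon X^{-n-1} \to X^{-n}$; by dualizing the tower term by term, replacing each contravariant $\C(Y,X^i)$ by the covariant representable $\C(X^i,-)$, this obstruction is identified with the failure of an $n$-step weak-cokernel tower over $\C^{\op}$ to terminate, and the existence of $n$-cokernels forces exactly that termination. Making this correspondence rigorous without the $\Tor$ machinery of the original proof — tracking how left-hand exactness of the kernel tower matches, through the transpose, right-hand exactness of a cokernel tower, with idempotent completeness guaranteeing that a ``stably zero'' syzygy is a genuine direct summand that splits off — is the main obstacle and, I expect, the technical heart of the argument.

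An alternative I would keep in reserve is an induction on $n$: peel off one weak kernel of $f^0$, pass to the induced morphism of syzygies, and invoke the case $n-1$. The difficulty there is that ``having $n$-cokernels'' is a genuinely different hypothesis for each $n$, so the inductive step would require an auxiliary category — of morphisms, or a stabilization of $\C$ — in which the cokernel hypothesis descends; I would pursue this only if the direct transpose bookkeeping turned out to be unwieldy.
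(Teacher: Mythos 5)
Your reduction to a single implication and your set-up are both correct: by duality it suffices to prove one direction, the weak-kernel tower $d^{-1},\dots,d^{-n}$ gives exactness of $\C(Y,-)$ applied at every spot except the left end, and the whole problem is to replace $X^{-n}$ by an object whose map to $X^{-n+1}$ is monic \emph{and still a weak kernel of} $d^{-n+1}$, i.e.\ a genuine kernel. But at exactly that point the proposal stops being a proof: the paragraph you label ``the crux'' names the step, predicts it is the technical heart, and defers it. The one idea you do offer there is not sound as stated. Weak kernels give exactness only under the covariant functors $\C(Y,-)$; applying the contravariant functors $\C(-,Y)$ to the tower (your ``replacing each $\C(Y,X^i)$ by $\C(X^i,-)$'') yields no exactness whatsoever, so ``dualizing the tower term by term'' does not identify the obstruction to monicity of $d^{-n}$ with the termination of any weak-cokernel tower over $\C^{\op}$. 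Making such a correspondence rigorous is not a routine bookkeeping task left to the reader --- it is the entire content of the theorem, and in the standard literature it is done precisely with the $\Tor$ machinery this paper sets out to avoid. The $n=0$ case and the fallback induction are likewise only gestured at.

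For comparison, here is how the paper fills the hole you left (stated in its direction, $n$-kernels $\Rightarrow$ $n$-cokernels, $n\geqslant 1$). From $a_0$ take weak cokernels $a_1,\dots,a_{n+1}$, i.e.\ go \emph{one step past} where the answer will live, and take an $n$-kernel $b_1,\dots,b_n$ of $a_{n+1}$ --- this is where the hypothesis enters. The weak-kernel property of the $b_i$ produces comparison maps $c_i\colon A_i\to B_i$ with $b_ic_i=c_{i+1}a_i$ and $c_{n+1}=1$; the weak-cokernel property of the $a_i$, together with monicity of $b_1$, produces a homotopy $d_{i+1}\colon A_{i+1}\to B_i$ with $c_i-b_{i-1}d_i=d_{i+1}a_i$. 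Evaluating the last homotopy identity against $b_n$ gives $b_n=b_nd_{n+1}b_n$, so $e=d_{n+1}b_n$ is an idempotent on $B_n$; idempotent completeness splits it as $e=fg$ with $gf=1$, and a short diagram chase (using that $a_{n+1}$ is a weak cokernel of $a_n$ and that $a_{n+1}b_n=0$) shows $gc_n$ is a genuine cokernel of $a_{n-1}$, whence $a_1,\dots,a_{n-1},gc_n$ is an $n$-cokernel of $a_0$. None of these ingredients --- the comparison maps, the homotopy, the resulting idempotent, or the verification that the split-off map is a cokernel --- appears in your proposal, and none of them follows formally from what you wrote; so as it stands the proposal is a correct plan whose decisive step is missing.
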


\begin{proof}
In what follows, we prove that if $\C$ has $n$-kernels, then $\C$ has $n$-cokernels. Then, by duality (by taking opposite categories), one can deduce the converse. The proof is divided into two cases, namely, $n = 0$ and $n \geqslant 1$. We begin with the latter.

Suppose that $n \geqslant 1$ and that $\C$ has $n$-kernels. Consider a morphism $a_{0} : A_{0} \to A_{1}$ in $\C$. In order to obtain an $n$-cokernel of $a_{0}$, let \[ \begin{tikzcd}
A_{1} \arrow[r, "a_{1}"] & A_{2} \arrow[r, "a_{2}"] & \cdots \arrow[r, "a_{n-1}"] & A_{n} \arrow[r, "a_{n}"] & A_{n+1} \arrow[r, "a_{n+1}"] & A_{n+2}
\end{tikzcd} \] be a sequence of morphisms such that $a_{i}$ is a weak cokernel of $a_{i-1}$ for each $1 \leqslant i \leqslant n + 1$. Moreover, let \[ \begin{tikzcd}
B_{1} \arrow[r, "b_{1}"] & B_{2} \arrow[r, "b_{2}"] & \cdots \arrow[r, "b_{n-1}"] & B_{n} \arrow[r, "b_{n}"] & A_{n+1}
\end{tikzcd} \] be an $n$-kernel of $a_{n+1}$. Because $a_{n+1}a_{n} = 0$ and $b_{n}$ is a weak kernel of $a_{n+1}$, there is a morphism $c_{n} : A_{n} \to B_{n}$ such that $a_{n} = b_{n}c_{n}$. Also, since $b_{n}c_{n}a_{n-1} = a_{n}a_{n-1} = 0$ and $b_{n-1}$ is a weak kernel of $b_{n}$, there is a morphism $c_{n-1} : A_{n-1} \to B_{n-1}$ with $c_{n}a_{n-1} = b_{n-1}c_{n-1}$. By proceeding similarly, we obtain morphisms $c_{i} : A_{i} \to B_{i}$ such that $c_{i+1}a_{i} = b_{i}c_{i}$ for each $1 \leqslant i \leqslant n$, where $c_{n+1}$ is the identity on $A_{n+1}$. \[ \begin{tikzcd}
A_{0} \arrow[r, "a_{0}"] & A_{1} \arrow[r, "a_{1}"] \arrow[d, "c_{1}"'] & A_{2} \arrow[r, "a_{2}"] \arrow[d, "c_{2}"'] \arrow[ld, "d_{2}"', dashed] & \cdots \arrow[r, "a_{n-1}"] \arrow[ld, "d_{3}"', dashed] & A_{n} \arrow[r, "a_{n}"] \arrow[d, "c_{n}"'] \arrow[ld, "d_{n}"', dashed] & A_{n+1} \arrow[r, "a_{n+1}"] \arrow[ld, "d_{n+1}"', dashed, shift right] \arrow[d, equal] & A_{n+2} \arrow[d, equal] \arrow[ld, "d_{n+2}"', dashed] \\
                         & B_{1} \arrow[r, "b_{1}"']                            & B_{2} \arrow[r, "b_{2}"']                                                         & \cdots \arrow[r, "b_{n-1}"']                             & B_{n} \arrow[r, "b_{n}"']                                                         & A_{n+1} \arrow[r, "a_{n+1}"']                                                             & A_{n+2}                                                
\end{tikzcd} \] Furthermore, as $b_{1}c_{1}a_{0} = c_{2}a_{1}a_{0} = 0$ and $b_{1}$ is a monomorphism, $c_{1}a_{0} = 0$. Thus, given that $a_{1}$ is a weak cokernel of $a_{0}$, there is a morphism $d_{2} :A_{2} \to B_{1}$ for which $c_{1} = d_{2}a_{1}$. Also, because $(c_{2} - b_{1}d_{2})a_{1} = c_{2}a_{1} - b_{1}c_{1} = 0$ and $a_{2}$ is a weak cokernel of $a_{1}$, there is a morphism $d_{3} : A_{3} \to B_{2}$ such that $c_{2} - b_{1}d_{2} = d_{3}a_{2}$. By proceeding similarly, we get morphisms $d_{i+1} : A_{i+1} \to B_{i}$ satisfying that $c_{i} - b_{i-1}d_{i} = d_{i+1}a_{i}$ for each $2 \leqslant i \leqslant n + 1$.

Next, note that $0 = d_{n+2}a_{n+1}b_{n} = (1 - b_{n}d_{n+1})b_{n} = b_{n} - b_{n}d_{n+1}b_{n}$, so that $b_{n} = b_{n}d_{n+1}b_{n}$. Consequently, $(d_{n+1}b_{n})^{2} = d_{n+1}b_{n}$, that is, $d_{n+1}b_{n}$ is idempotent. In this case, given that $\C$ is idempotent complete, there are morphisms $f : C \to B_{n}$ and $g : B_{n} \to C$ for which $fg = d_{n+1}b_{n}$ and $gf = 1$. We claim that $gc_{n}$ is a cokernel of $a_{n-1}$. In fact, first, observe that $fgc_{n}a_{n-1} = d_{n+1}b_{n}c_{n}a_{n-1} = d_{n+1}a_{n}a_{n-1} = 0$, and because $f$ is a monomorphism, $gc_{n}a_{n-1} = 0$. Next, suppose that $v : A_{n} \to V$ is a morphism such that $va_{n-1} = 0$. Then, as $a_{n}$ is a weak cokernel of $a_{n-1}$, there is a morphism $w : A_{n+1} \to V$ for which $v = wa_{n}$. Consequently, $v = wa_{n} = wb_{n}c_{n} = wb_{n}d_{n+1}b_{n}c_{n} = wb_{n}fgc_{n}$. Therefore, $gc_{n}$ is a weak cokernel of $a_{n-1}$. Thus, to conclude that $gc_{n}$ is a cokernel of $a_{n-1}$, it suffices to show that $gc_{n}$ is an epimorphism. Well, suppose that $x : C \to X$ is a morphism with $xgc_{n} = 0$. Because $gf = 1$ and $fg = d_{n+1}b_{n}$, we have $g = gfg = gd_{n+1}b_{n}$, hence $0 = xgc_{n} = xgd_{n+1}b_{n}c_{n} = xgd_{n+1}a_{n}$. Therefore, since $a_{n+1}$ is a weak cokernel of $a_{n}$, there is a morphism $z : A_{n+2} \to X$ such that $xgd_{n+1} = za_{n+1}$. Then $xg = xgd_{n+1}b_{n} = za_{n+1}b_{n} = 0$, which implies that $x = 0$ as $g$ is an epimorphism. Finally, since $gc_{n}$ is a cokernel of $a_{n-1}$ and $a_{i}$ is a weak cokernel of $a_{i-1}$ for each $1 \leqslant i \leqslant n - 1$, we conclude that \[ \begin{tikzcd}
A_{1} \arrow[r, "a_{1}"] & A_{2} \arrow[r, "a_{2}"] & \cdots \arrow[r, "a_{n-1}"] & A_{n} \arrow[r, "gc_{n}"] & C
\end{tikzcd} \] is an $n$-cokernel of $a_{0}$. Hence $\C$ has $n$-cokernels.\footnote{We remark that the above arguments can be simplified when $n = 1$. Indeed, in this case, from $1 - b_{1}d_{2} = d_{3}a_{2}$, it follows that $b_{1}(1 - d_{2}b_{1}) = (1 - b_{1}d_{2})b_{1} = d_{3}a_{2} b_{1} = 0$, hence $1 - d_{2}b_{1} = 0$ as $b_{1}$ is a monomorphism, so that $d_{2}b_{1} = 1$. Therefore, in the proof, we can take both $f$ and $g$ to be the identity on $B_{1}$, and we deduce that $c_{1}$ is a cokernel of $a_{0}$.}

Now, we consider the case $n = 0$. Assume that $\C$ has $0$-kernels, and let $a_{0} : A_{0} \to A_{1}$ be a morphism in $\C$. In order to obtain a $0$-cokernel of $a_{0}$, let $a_{1} : A_{1} \to A_{2}$ be a weak cokernel of $a_{0}$, and take a $0$-kernel $b : B \to A_{2}$ of $a_{1}$, so that $b$ is a monomorphism and there is a split epimorphism $c : A_{1} \to B$ for which $a_{1} = bc$. Note that $bca_{0} = a_{1}a_{0} = 0$, hence $ca_{0} = 0$ as $b$ is a monomorphism. \[ \begin{tikzcd}
A_{0} \arrow[r, "a_{0}"] & A_{1} \arrow[r, "a_{1}"] \arrow[d, "c"'] & A_{2} \arrow[d, equal] \\
                         & B \arrow[r, "b"']                        & A_{2}                 
\end{tikzcd} \] Let $r : B \to A_{1}$ be such that $cr = 1$. Then $(rc)^{2} = rc$, which implies that $(1 - rc)^{2} = 1 - rc$, that is, $1 - rc$ is idempotent. Since $\C$ is idempotent complete, there are morphisms $f : C \to A_{1}$ and $g : A_{1} \to C$ satisfying $fg = 1 - rc$ and $gf = 1$. We claim that $ga_{0}$ is a $0$-cokernel of $a_{0}$. Indeed, observe that $fga_{0} = (1 - rc)a_{0} = a_{0}$. Therefore, given that $f$ is a split monomorphism, it suffices to show that $ga_{0}$ is an epimorphism to conclude that $ga_{0}$ is a $0$-cokernel of $a_{0}$. To verify this, suppose that $x : C \to X$ is a morphism for which $xga_{0} = 0$. Given that $a_{1}$ is a weak cokernel of $a_{0}$, there is a morphism $z : A_{2} \to X$ such that $xg = za_{1}$. Then $x = xgf = za_{1}f = zbcf$. However, from $1 = rc + fg$, we get that $f = rcf + f$, so that $rcf = 0$, which implies that $cf = 0$ as $r$ is a monomorphism. Consequently, $x = 0$, and $ga_{0}$ is a $0$-cokernel of $a_{0}$. Hence $\C$ has $0$-cokernels.
\end{proof}

We can now use Theorem \ref{theorem.1} to deduce Corollary \ref{corollary.2}, which was used in \cite[Proposition 6]{MR4971590} to prove Theorem \ref{theorem.1} in the case that $\C$ is an additive category. By doing so, we obtain an elementary proof of Corollary \ref{corollary.2}, whose standard proof can be found, for example, in \cite[Corollary 5.6]{MR2027559} or \cite[Theorem 67]{MR4971590}.

\begin{corollary}\label{corollary.2}
Let $\C$ be an idempotent complete additive category that has weak kernels and weak cokernels. Then $\gldim (\mod \C) = \gldim (\mod \C^{\op})$.
\end{corollary}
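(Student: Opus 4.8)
The plan is to translate the homological statement about global dimensions into the purely categorical statement about $n$-(co)kernels supplied by Theorem \ref{theorem.1}. The bridge I would establish is the following correspondence, valid for every nonnegative integer $n$:
\[ \gldim (\mod \C) \leqslant n + 1 \iff \C \text{ has } n\text{-kernels}, \]
together with its dual, namely $\gldim (\mod \C^{\op}) \leqslant n+1$ if and only if $\C$ has $n$-cokernels (this is the first equivalence applied to $\C^{\op}$, recalling that weak cokernels in $\C$ are weak kernels in $\C^{\op}$ and that $\mod \C^{\op}$ consists of the finitely presented left $\C$-modules). Granting this, for each $n \geqslant 0$ Theorem \ref{theorem.1} yields
\[ \gldim (\mod\C) \leqslant n+1 \iff \C \text{ has } n\text{-kernels} \iff \C \text{ has } n\text{-cokernels} \iff \gldim (\mod\C^{\op}) \leqslant n+1, \]
so that $\gldim (\mod\C) \leqslant m$ and $\gldim (\mod\C^{\op}) \leqslant m$ are equivalent for every integer $m \geqslant 1$, from which the equality of global dimensions will follow.

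To prove the correspondence I would use that, since $\C$ has weak kernels, $\mod\C$ is abelian with enough projectives, and that, since $\C$ is idempotent complete, the projective objects of $\mod\C$ are exactly the representable functors $\C(-,A)$. Every object of $\mod\C$ is a cokernel $N = \mathrm{coker}\,\C(-,a)$ of a map induced by some $a : A \to B$ in $\C$, and a projective resolution of $N$ is obtained by Yoneda from an iterated weak-kernel chain of $a$, since a weak kernel $k$ of a morphism $f$ induces an epimorphism $\C(-,K) \twoheadrightarrow \ker \C(-,f)$. Chasing images along this resolution, the $(n+1)$-st syzygy $\Omega^{n+1}N$ is the image of the functor induced by the leftmost map of an $n$-term kernel chain of $a$; when that chain is a genuine $n$-kernel, its leftmost map is a monomorphism, hence the induced map is a monomorphism of functors and its image is representable, thus projective. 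This gives that $\C$ having $n$-kernels implies $\mathrm{pd}(N) \leqslant n+1$ for every $N$, i.e. $\gldim (\mod\C) \leqslant n+1$. For the converse I would begin with the weak-kernel chain of an arbitrary $a$, use $\gldim (\mod\C)\leqslant n+1$ to conclude that the relevant syzygy is projective (hence representable), and then invoke idempotent completeness to replace the tail of the chain by a genuine $n$-kernel whose last map is a monomorphism.

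Finally I would assemble the conclusion. From the equivalence of the bounds "$\leqslant m$" for all $m \geqslant 1$, if $\gldim (\mod\C)$ is finite and at least $1$ one obtains $\gldim (\mod\C^{\op}) = \gldim (\mod\C)$ by comparing the two in both directions, and if $\gldim (\mod\C) = \infty$ then $\gldim (\mod\C^{\op}) = \infty$ as well. The only situation the correspondence does not settle directly is the distinction between the values $0$ and $1$, since both "$\gldim = 0$" and "$\gldim = 1$" yield "$\C$ has $n$-kernels for all $n \geqslant 0$". I would close this gap by observing that $\gldim (\mod\C)=0$ is self-dual: semisimplicity of $\mod\C$ forces every morphism $a$ of $\C$ to acquire, after splitting off the representable image of $\C(-,a)$ and its complement via idempotent completeness, the normal form $\mathrm{id} \oplus 0 : I \oplus K \to I \oplus B'$, and this description is symmetric under passage to $\C^{\op}$, whence $\mod\C^{\op}$ is semisimple as well. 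Thus $\gldim (\mod\C)=0 \iff \gldim (\mod\C^{\op})=0$, which removes the ambiguity and delivers $\gldim (\mod\C)=\gldim (\mod\C^{\op})$ in all cases.

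I expect the main obstacle to be the correspondence itself, and within it the converse direction: translating the homological bound $\gldim (\mod\C)\leqslant n+1$ back into a genuine $n$-kernel requires promoting a projective (representable) syzygy together with the weak-kernel chain into an exact $n$-kernel with a monic leftmost map, which is precisely where idempotent completeness is indispensable. The secondary subtlety is the bookkeeping of the $\{0,1\}$ edge case above; although conceptually easy, it is the one place where the clean dictionary between $n$-kernels and global dimension $n+1$ underdetermines the answer and must be supplemented by the self-duality of semisimplicity.
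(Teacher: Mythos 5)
Your proposal is correct and takes essentially the same route as the paper: reduce the equality to the equivalence of the bounds $\gldim (\mod \C) \leqslant m$ and $\gldim (\mod \C^{\op}) \leqslant m$, handle $m \geqslant 1$ via the dictionary ``$\gldim (\mod \C) \leqslant n+1$ if and only if $\C$ has $n$-kernels'' (which the paper simply cites, while you sketch its proof via representable projectives and syzygies) together with Theorem \ref{theorem.1}, and settle the remaining $m = 0$ case by a self-dual characterization of semisimplicity. Your normal form $\mathrm{id} \oplus 0 : I \oplus K \to I \oplus B'$ is equivalent, in an idempotent complete additive category, to the paper's formulation that every morphism factors as a split epimorphism followed by a split monomorphism, so the two edge-case arguments coincide.
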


\begin{proof}
To begin with, recall that the categories $\mod \C$ and $\mod \C^{\op}$ are abelian since $\C$ has weak kernels and weak cokernels, see \cite[page 41]{auslander1971representation} or \cite[Corollary 1.5]{MR0209333}, hence it makes sense to consider their global dimensions. To verify that these dimensions coincide, it is enough to show that $\gldim (\mod \C) \leqslant m$ if and only if $\gldim (\mod \C^{\op}) \leqslant m$, whenever $m$ is a nonnegative integer. So, let $m$ be such an integer.

Suppose that $m \geqslant 1$, and write $m = n + 1$, where $n$ is a nonnegative integer. By \cite[Proposition 2.1]{2509.24810} and \cite[Proposition 5]{MR4971590}, it holds that $\gldim (\mod \C) \leqslant m$ if and only if $\C$ has $n$-kernels. Dually, it holds that $\gldim (\mod \C^{\op}) \leqslant m$ if and only if $\C$ has $n$-cokernels. Thus, it follows from Theorem \ref{theorem.1} that $\gldim (\mod \C) \leqslant m$ if and only if $\gldim (\mod \C^{\op}) \leqslant m$.

To complete the proof, it remains to show that $\gldim (\mod \C) = 0$ if and only if $\gldim (\mod \C^{\op}) = 0$. Well, it is easy to see that $\gldim (\mod \C) = 0$ if and only if every morphism $f$ in $\C$ can be written as $f = gh$ in $\C$, where $h$ is a split epimorphism and $g$ is a split monomorphism. By replacing $\C$ by $\C^{\op}$ in this statement, and by noticing that its latter condition does not change, we then conclude that $\gldim (\mod \C) = 0$ if and only if $\gldim (\mod \C^{\op}) = 0$.
\end{proof}

The following well-known result is a particular case of Corollary \ref{corollary.2}.

\begin{corollary}\label{corollary.3}
Let $\Lambda$ be a coherent ring. Then $\gldim (\mod \Lambda) = \gldim (\mod \Lambda^{\op})$.
\end{corollary}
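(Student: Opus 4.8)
The plan is to realize both $\mod \Lambda$ and $\mod \Lambda^{\op}$ as module categories over a single additive category to which Corollary \ref{corollary.2} applies. Throughout I read ``coherent'' as two-sided coherent (both left and right coherent), which is exactly what makes both $\mod \Lambda$ and $\mod \Lambda^{\op}$ abelian. The natural candidate for the category is $\C := \proj \Lambda$, the category of finitely generated projective right $\Lambda$-modules. This $\C$ is additive, and it is idempotent complete because a direct summand of a finitely generated projective module is again finitely generated projective. Once I have checked that $\C$ has weak kernels and weak cokernels, Corollary \ref{corollary.2} yields $\gldim(\mod \C) = \gldim(\mod \C^{\op})$, and it will remain only to identify $\mod \C$ with $\mod \Lambda$ and $\mod \C^{\op}$ with $\mod \Lambda^{\op}$.

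For the hypotheses, I would use the standard translation between coherence and the existence of weak kernels. A weak kernel in $\C$ of a morphism $\phi \colon P \to Q$ of finitely generated projectives exists precisely when the module-theoretic kernel of $\phi$ is finitely generated: one then covers that kernel by a finitely generated projective and composes with the inclusion into $P$. Since $\Lambda$ is right coherent, every such kernel is finitely generated, so $\C$ has weak kernels. For weak cokernels I would pass through the projective duality $(-)^{*} = \Hom_{\Lambda}(-, \Lambda)$, which restricts to a contravariant equivalence between the finitely generated projective right and left $\Lambda$-modules. Under this duality a weak cokernel in $\proj \Lambda$ corresponds to a weak kernel in $\proj \Lambda^{\op}$, and the latter exist because $\Lambda$ is also left coherent; hence $\C$ has weak cokernels as well.

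Finally, to identify the module categories, I would use the evaluation functor $F \mapsto F(\Lambda)$. Applying a finite presentation $\C(-, P) \to \C(-, Q) \to F \to 0$ of a finitely presented right $\C$-module at the object $\Lambda \in \C$ and using $\C(\Lambda, P) \cong P$, this functor sends $F$ to $\operatorname{coker}(\phi)$, a finitely presented right $\Lambda$-module; this is the classical equivalence $\mod \C \simeq \mod \Lambda$. Composing the duality above with the same evaluation argument gives $\mod \C^{\op} \simeq \mod((\proj \Lambda)^{\op}) \simeq \mod(\proj \Lambda^{\op}) \simeq \mod \Lambda^{\op}$, and the result then follows from Corollary \ref{corollary.2}. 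I expect the main obstacle to be bookkeeping rather than conceptual: carefully matching variances in the two equivalences (contravariant functors on $\C$ versus right $\Lambda$-modules, and the role of the duality on the opposite side) and confirming that two-sided coherence is exactly what supplies both weak kernels and weak cokernels.
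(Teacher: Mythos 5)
Your proposal is correct and takes essentially the same route as the paper: apply Corollary \ref{corollary.2} to $\C = \proj \Lambda$ and transport the conclusion through the evaluation equivalences $\mod (\proj \Lambda) \approx \mod \Lambda$ and $\mod (\proj \Lambda)^{\op} \approx \mod \Lambda^{\op}$. The only difference is in how the hypothesis is checked: you verify weak kernels and weak cokernels directly from two-sided coherence (finitely generated kernels covered by projectives, plus the duality $\Hom_{\Lambda}(-,\Lambda)$), whereas the paper deduces them abstractly from the abelianness of $\mod \Lambda$ and $\mod \Lambda^{\op}$ via the Freyd--Auslander criterion; both verifications are standard and sound.
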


\begin{proof}
Let $\proj \Lambda$ be the category of finitely generated projective right $\Lambda$-modules, which is idempotent complete and additive. Then there are equivalences of categories $\mod (\proj \Lambda) \approx \mod \Lambda$ and $\mod (\proj \Lambda)^{\op} \approx \mod \Lambda^{\op}$, which are given by the evaluation at $\Lambda$, see \cite[Proposition 2.7]{MR349747} or \cite[Section 8]{MR4971590}. Since $\Lambda$ is coherent, the categories $\mod \Lambda$ and $\mod \Lambda^{\op}$ are abelian, hence so are $\mod (\proj \Lambda)$ and $\mod (\proj \Lambda)^{\op}$. Consequently, $\proj \Lambda$ has weak kernels and weak cokernels, see \cite[page 41]{auslander1971representation} or \cite[Corollary 1.5]{MR0209333}. Therefore, by Corollary \ref{corollary.2} and the previous equivalences, $\gldim (\mod \Lambda) = \gldim (\mod \Lambda^{\op})$.
\end{proof}

We remark that, by following the arguments employed in the proofs of Theorem \ref{theorem.1} and Corollary \ref{corollary.2}, it is also possible to prove Corollary \ref{corollary.3} directly, in terms of projective resolutions of objects in $\mod \Lambda$ and $\mod \Lambda^{\op}$. In fact, $n$-kernels and $n$-cokernels in $\C$ correspond to certain projective resolutions in $\mod \C$ and $\mod \C^{\op}$, respectively, see \cite[page 1131]{MR4971590} and \cite[page 5]{2509.24810}. Thus, one could reformulate the proofs of Theorem \ref{theorem.1} and Corollary \ref{corollary.2} in terms of finitely presented right and left $\C$-modules, and then reproduce them for the case of finitely presented right and left $\Lambda$-modules.\footnote{Note that such proofs for $\C$-modules would rely on the functors $(-)^{\ast} : \mod \C \leftrightarrow \mod \C^{\op}$ described in \cite[page 1132]{MR4971590}. Hence the proof for $\Lambda$-modules would make use of the functors $(-)^{\ast} : \mod \Lambda \leftrightarrow \mod \Lambda^{\op}$ given by $(-)^{\ast} = \Hom_{\Lambda}(-,\Lambda)$ in $\mod \Lambda$ and by $(-)^{\ast} = \Hom_{\Lambda^{\op}}(-,\Lambda^{\op})$ in $\mod \Lambda^{\op}$.}

We also observe that, for a coherent ring $\Lambda$, the global dimensions of $\mod \Lambda$ and of $\mod \Lambda^{\op}$ coincide with the weak global dimension of $\Lambda$, see \cite[Proposition 1.1]{MR306265}. This dimension is usually used to prove the well-known Corollary \ref{corollary.4}, see \cite[Corollary 5]{MR74406}, and depends on the tensor product $- \otimes_{\Lambda} -$ and $\Tor_{i}^{\Lambda}(-,-)$ functors on $\Mod \Lambda \times \Mod \Lambda^{\op}$. The proof that we present below for Corollary \ref{corollary.4}, however, bypasses the introduction of these functors.

\begin{corollary}\label{corollary.4}
Let $\Lambda$ be a noetherian ring. Then $\gldim (\Mod \Lambda) = \gldim (\Mod \Lambda^{\op})$.
\end{corollary}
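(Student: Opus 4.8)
The plan is to reduce the statement for all modules to the statement for finitely generated modules, where Corollary \ref{corollary.3} already applies. Since a noetherian ring is coherent, Corollary \ref{corollary.3} gives $\gldim (\mod \Lambda) = \gldim (\mod \Lambda^{\op})$. Thus it suffices to prove the single identity $\gldim (\Mod \Lambda) = \gldim (\mod \Lambda)$: applying it once to $\Lambda$ and once to $\Lambda^{\op}$ (which is again noetherian) and chaining the three equalities yields $\gldim (\Mod \Lambda) = \gldim (\mod \Lambda) = \gldim (\mod \Lambda^{\op}) = \gldim (\Mod \Lambda^{\op})$.

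To identify $\gldim (\mod \Lambda)$ with a supremum of projective dimensions taken inside $\Mod \Lambda$, I would use that $\Lambda$ is left noetherian: every finitely generated module is finitely presented, and every submodule of a finitely generated module is again finitely generated. Hence each object of $\mod \Lambda$ admits a projective resolution by finitely generated projectives, and a finitely generated projective is projective in $\Mod \Lambda$. Writing $\mathrm{pd}_{\Lambda} M$ for the projective dimension of $M$ in $\Mod \Lambda$, it follows that the projective dimension of $M \in \mod \Lambda$ computed in the abelian category $\mod \Lambda$ agrees with $\mathrm{pd}_{\Lambda} M$, so that $\gldim (\mod \Lambda) = \sup \{ \mathrm{pd}_{\Lambda} M : M \in \mod \Lambda \}$. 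Since finitely generated modules are in particular modules, this immediately gives $\gldim (\mod \Lambda) \leqslant \gldim (\Mod \Lambda)$.

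For the reverse inequality I would invoke the classical theorem of Auslander that the global dimension of $\Mod \Lambda$ is detected by cyclic modules: $\gldim (\Mod \Lambda) = \sup \{ \mathrm{pd}_{\Lambda} (\Lambda / I) : I \text{ a left ideal of } \Lambda \}$. Each quotient $\Lambda / I$ is finitely generated, so its projective dimension is bounded by $\gldim (\mod \Lambda)$ by the previous paragraph, whence $\gldim (\Mod \Lambda) \leqslant \gldim (\mod \Lambda)$. Combining the two inequalities gives $\gldim (\Mod \Lambda) = \gldim (\mod \Lambda)$, which completes the argument.

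I expect the reverse inequality $\gldim (\Mod \Lambda) \leqslant \gldim (\mod \Lambda)$ to be the main obstacle, since it requires promoting a bound on the projective dimensions of finitely generated modules to a bound valid for arbitrary modules. This is precisely where the noetherian hypothesis is indispensable and where the cyclic-module criterion does the real work; mere coherence, which sufficed in Corollary \ref{corollary.3}, would not be enough, as over a coherent ring the global dimension of $\Mod \Lambda$ need not be computed on finitely generated modules.
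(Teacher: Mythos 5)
Your proposal is correct and follows essentially the same route as the paper: the paper also reduces to Corollary \ref{corollary.3} by establishing $\gldim(\Mod \Lambda) = \gldim(\mod \Lambda)$ via Auslander's theorem \cite[Theorem 1]{MR74406}, which is precisely the cyclic-module criterion you invoke. The only difference is that you spell out the two inequalities (including the identification of projective dimensions in $\mod \Lambda$ and $\Mod \Lambda$ for finitely generated modules over a noetherian ring) that the paper leaves implicit in its citation.
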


\begin{proof}
Given that $\Lambda$ is noetherian, the categories $\mod \Lambda$ and $\mod \Lambda^{\op}$ coincide with the categories of finitely generated right $\Lambda$-modules and finitely generated left $\Lambda$-modules, respectively. Therefore, it follows from a result of Auslander, namely, \cite[Theorem 1]{MR74406}, that $\gldim (\Mod \Lambda) = \gldim (\mod \Lambda)$ and $\gldim (\Mod \Lambda^{\op}) = \gldim (\mod \Lambda^{\op})$. Thus, we conclude from Corollary \ref{corollary.3} that $\gldim (\Mod \Lambda) = \gldim (\Mod \Lambda^{\op})$.
\end{proof}

\end{document}